\documentclass[12pt]{amsart}
\usepackage[margin=2cm]{geometry}
\usepackage[utf8]{inputenc}
\usepackage[english]{babel}
\usepackage{amsthm}
\usepackage{mathtools}
\usepackage{tikz-cd}
\usepackage[style=alphabetic,maxnames=10,backend=bibtex,hyperref=true]{biblatex}
\addbibresource{em-stabilisation.bib}

\newtheorem{theorem}{Theorem}
\newtheorem{cor}[theorem]{Corollary}
\newtheorem{lemma}[theorem]{Lemma}
\newtheorem{defi}[theorem]{Definition}
\usepackage{amsmath}
\usepackage{amssymb}
\usepackage{graphicx}
\usepackage[hidelinks]{hyperref}
\newcommand{\UU}{\mathcal{U}}
\newcommand{\id}{\mathsf{id}}
\newcommand{\pt}{\mathrm{pt}}
\newcommand{\refl}{\mathsf{refl}}
\newcommand{\Sq}{\mathrm{Sq}}
\newcommand{\ap}{\mathrm{ap}}
\newcommand{\Z}{\mathbb{Z}}
\newcommand{\grp}{\mathrm{grp}}
\newcommand{\K}{\mathrm{K}}
\newcommand{\Zc}{\mathrm{Z}}
\DeclareMathOperator{\baut}{BAut}
\newcommand*\sq{\mathbin{\vcenter{\hbox{\rule{.3ex}{.3ex}}}}}
\DeclarePairedDelimiter{\trunc}{\lVert}{\rVert}

\title{Eilenberg--MacLane spaces and stabilisation
in~homotopy~type~theory}
\author{David Wärn}
\email{warnd@chalmers.se}

\begin{document}
\maketitle

\begin{abstract}

In this note, we study the delooping of spaces and maps in homotopy type
theory. We show that in some cases, spaces have a unique delooping, and give a
simple description of the delooping in these cases. We explain why some maps,
such as group homomorphisms, have a unique delooping. We discuss some
applications to Eilenberg--MacLane spaces and cohomology. 

\end{abstract}

\section{Introduction}

The loop space functor $\Omega$ is an operation on pointed types and pointed
maps between them. In this note, we study the \emph{delooping} of types and
maps: given a pointed type $X$, when can we find a pointed type whose loop
space is equivalent to $X$? And given a pointed map $f : \Omega A \to_\pt
\Omega B$, when can we find a map $A \to_\pt B$ whose looping equals $f$? The
general answer is rather complicated, involving group operations and an
infinite tower of coherences, but according to the stabilisation theorem
\cite{highergroups}, the answer becomes much simpler if we put some
connectivity and truncation assumptions on $A$ and $B$. The purpose of this
note is to give a direct, type-theoretic account of these simple special cases.
We also explain how to use these results to set up the theory of
Eilenberg--MacLane spaces and cohomology operations. We assume only basic
familiarity with homotopy type theory, as developed in \cite{hottbook}. We will
not need to assume the Freudenthal suspension theorem, nor will we make use of
any higher inductive types other than propositional truncation.

\subsection*{Notation}

As in \cite{hottbook}, we write $a = b$ for the type of identifications between
$a$ and $b$, $\refl_a : a = a$ for the reflexivity
identification, $\sq : (a = b) \to (b = c) \to (a = c)$ for path concatenation,
$\ap_f : (a = b) \to (f\, a = f\, b)$ for the action of a function on paths,
$\UU$ for a univalent universe, and $\trunc A$ for propositional
truncation. We write $(a : A) \to B\, a$ for the $\Pi$-type $\Pi_{a : A}
B\, a$, and $(a : A) \times B\, a$ for the $\Sigma$-type $\Sigma_{a : A}
B\, a$. We write $\UU_\pt$ for the type $(X : \UU) \times X$ of
\emph{pointed} types. For $A : \UU_\pt$, we will write $|A| : \UU$ for its
underlying type, and $\pt_A : |A|$ for its point. For $A, B : \UU_\pt$, we
write $A \to_\pt B$ for the type $(f : |A| \to |B|) \times (f\, \pt_A =
		\pt_B)$ of \emph{pointed} functions. For $f : A \to_\pt B$, we
write $|f| : |A| \to |B|$ for the underlying function, and $\pt_f : |f|\,
\pt_A = \pt_B$ for the proof that it is pointed. For $A : \UU_\pt$, we
write $\Omega A : \UU_\pt$ for the loop space $(\pt_A = \pt_A, \refl_{\pt_A})$.
For $f : A \to_\pt B$ we write $\Omega f : \Omega A \to_\pt \Omega B$ for the
action on loops, $p : \pt_A = \pt_A \mapsto \pt_f^{-1} \sq \ap_{|f|} p \sq \pt_f : \pt_B = \pt_B$.
We write $A \simeq_\pt B$ for the type $(f : A \simeq B) \times f\, \pt_A = \pt_B$ of pointed
equivalences.

\subsection*{Acknowledgements}

We thank Thierry Coquand for his support throughout the
project, as well as Felix Cherubini, Louise Leclerc, Jarl G. Taxerås Flaten,
and Axel Ljungström for fruitful discussions.

\section{Delooping types}

Let $X : \UU_\pt$ be a pointed type, and suppose we want -- without further
inputs -- to construct a delooping of $X$. That is, we want to find a pointed
type whose loop space is equivalent to $X$. One way would be to use the
\emph{suspension} $\Sigma X$ \cite{eilenberg}, which is freely generated by a map $X \to_\pt
\Omega \Sigma X$ and so necessarily maps \emph{to} any delooping of $X$.
Instead, we will use a \emph{cofree} construction, which necessarily has a map
\emph{from} any delooping of $X$. Similar ideas are discussed in \cite{central}.

\begin{defi}
For $X : \UU_\pt$, the type $TX$ of \emph{$X$-torsors} is given by
\[
	TX \coloneqq (Y : \UU) \times \trunc Y \times (y : Y) \to X \simeq_\pt (Y, y).%
\footnote{A priori, since $\UU$ is a \emph{large} type, so is $TX$. 
However, we could just as
well quantify over $Y : \baut |X|$ in the definition of $TX$, where 
$\baut |X| \simeq (Y : \UU) \times \trunc{Y \simeq |X|}$. It is reasonable to assume that $\baut |X|$
is small, either by the replacement principle from \cite{join}, or by
simply postulating the existence of enough small univalent type families. 
In the rest of the note we ignore universe issues and assume $TX : \UU$.}
\]
\end{defi}

Intuitively, an $X$-torsor is a type which looks like $X$ at every point, and
merely has a point, even though we might not have access to any particular point.

\begin{theorem}\label{thm}
If the type $((Y,h,\mu) : TX) \times Y$ of pointed torsors is contractible,
then $TX$ is a delooping of $X$.
That is, we have a point $\pt_{TX} : TX$ with an equivalence $\Omega(TX, \pt_{TX}) \simeq_\pt X$.
Moreover, $TX$ is the unique delooping of $X$ in this case.
\end{theorem}
\begin{proof}
For the first part, we apply the fundamental theorem of identity types~\cite[Theorem~11.2.2]{intro} 
to the type family over
$TX$ given by $(Y,h,\mu) \mapsto Y$. Say $(Y,h,\mu) : TX$ and $y : Y$.
We then point $TX$ by $\pt_{TX} \coloneqq (Y,h,\mu)$. Note that $X \simeq_\pt (Y,y)$
by $\mu(y)$.
The fundamental theorem tells us that $(Y,h,\mu) = (Y',h',\mu')$ is equivalent
to $Y'$ for any $(Y',h',\mu') : TX$, where the map from $(Y,h,\mu) = (Y',h',\mu')$
to $Y'$ is given by transporting $y$. That is, saying a torsor is trivial
is equivalent to saying that it is pointed. In particular 
$(\pt_{TX} = \pt_{TX}) \simeq_\pt (Y,y) \simeq_\pt X$ as claimed.

We now show uniqueness. Consider another delooping $Z : \UU_\pt$, $e : \Omega Z \simeq_\pt X$
with $Z$ connected. We first define a map $f : |Z| \to TX$. For $z : |Z|$, we take the underlying
type of $f\, z$ to be $z = \pt_Z$. This is merely inhabited since $Z$
is connected, and for any $p : z = \pt_Z$ we have $(z = \pt_Z,p) \simeq_\pt \Omega Z \simeq_\pt X$
by induction on $p$. This finishes the definition of $f$. We have $f\,\pt_Z = \pt_{TX}$
since $f\, \pt_Z$ is pointed by $\refl_{\pt_Z}$ and hence trivial.
The action of $f$ on paths $(z = \pt_Z) \to (f\,z = \pt_{TX})$
must send $p : z = \pt_Z$ to the proof $f\,z = \pt_{TX}$ corresponding to the point $p$
of $f\,z$, by induction $p$. By unfolding definitions it can be seen that
the action $\Omega Z \to_\pt \Omega TX$ on loops corresponds to the identity $X \to_\pt X$.
In particular it is an equivalence. By Whitehead's principle~\cite[Corollary~8.8.2]{hottbook},
$f$ itself is an equivalence. By univalence, the delooping $(Z,e)$ equals the
one given by $TX$.
\end{proof}
The following lemma provides an alternative description of the type of pointed 
$X$-torsors, which will make it feasible to determine when it is contractible.

\begin{lemma}\label{mainequiv}
We have an equivalence of types 
\[ ((Y, h, \mu) : TX) \times Y
\simeq (\mu : (x : |X|) \to X \simeq_\pt (|X|, x)) \times (\mu\, \pt_X = \id_X).
\]
\end{lemma}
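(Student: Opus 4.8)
The plan is to reduce both sides to a common contractible type, trading the ``floating'' pointed torsor $(Y,y)$ on the left for the rigid basepoint constraint $\mu\,\pt_X = \id_X$ on the right; the point $y$ is exactly what lets us rigidify $(Y,y)$ to $X$ via the equivalence $\mu\,y$. First I would unfold the left-hand side as
\[
  (Y : \UU) \times \trunc{Y} \times \bigl( (y' : Y) \to X \simeq_\pt (Y, y') \bigr) \times Y,
\]
commute the trailing point $y : Y$ forward past the other factors, and observe that $\trunc{Y}$ is then an inhabited proposition (via $|y|$), hence contractible and discardable. This leaves the type
\[
  E \coloneqq (Y : \UU) \times (y : Y) \times \bigl( (y' : Y) \to X \simeq_\pt (Y, y') \bigr).
\]

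The heart of the argument is a ``redundant singleton'' bridge between $E$ and the right-hand side. I would introduce
\[
  E' \coloneqq (Y : \UU) \times (y : Y) \times (e : X \simeq_\pt (Y, y)) \times \bigl( (y' : Y) \to X \simeq_\pt (Y, y') \bigr) \times (e = \mu\, y),
\]
adjoining a fresh pointed equivalence $e$ at the basepoint together with a proof that it agrees with $\mu\,y$. Reading $e$ and its constraint as the last two factors, the pair $(e : X \simeq_\pt (Y,y)) \times (e = \mu\, y)$ is a singleton based at $\mu\,y$, so contracting it yields $E' \simeq E$.

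Now I would contract $E'$ in the other direction. Reassociating so that $(Y, y, e)$ forms a $\Sigma$-prefix, univalence for pointed types identifies $X \simeq_\pt (Y,y)$ with the pointed identity type $X =_{\UU_\pt} (Y, y)$, so the prefix $(Y : \UU) \times (y : Y) \times (X \simeq_\pt (Y, y))$ is the based path space at $X$ and hence contractible, with centre $(|X|, \pt_X, \id_X)$. Contracting this prefix substitutes $Y := |X|$, $y := \pt_X$, and $e := \id_X$ into the dependent remainder, leaving exactly
\[
  (\mu : (x : |X|) \to X \simeq_\pt (|X|, x)) \times (\id_X = \mu\, \pt_X),
\]
which is the stated right-hand side after reversing the final identification. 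Composing the two equivalences $E \simeq E'$ and $E' \simeq \text{RHS}$ then completes the proof.

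The main obstacle I expect is the bookkeeping in this last contraction: one must confirm that univalence really presents $(Y, y, e)$ as a based path space, so that its centre is $(|X|, \pt_X, \id_X)$ with $e$ specialising to the identity, and then track carefully how the leftover family $\mu$ and the constraint $e = \mu\,y$ restrict to that centre. The remaining manipulations — the $\Sigma$-reassociations and the contraction of the inhabited proposition $\trunc{Y}$ — are routine.
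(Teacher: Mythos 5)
Your proof is correct and takes essentially the same route as the paper's: after dropping the redundant $\trunc Y$, your singleton bridge adjoining $(e : X \simeq_\pt (Y,y)) \times (e = \mu\, y)$ and the subsequent univalence-based contraction of the prefix $(Y, y, e)$ are precisely the paper's two applications of contractibility of singletons (first adding the fields $p : X \simeq_\pt Z$ and $\mu\, \pt_Z = p$, then removing $Z$ and $p$). The only cosmetic difference is the orientation of the auxiliary identification, which you correctly reverse at the end.
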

The right-hand side is roughly the type of \emph{coherent $H$-space structures} on $X$,
but note that we asymmetrically require invertibility on one side.
\begin{proof}
We have
\begin{align*}
((Y, h, \mu) : TX) \times Y
	&\simeq 
	(Y : \UU) \times \trunc Y \times ((y : Y) \to X \simeq_\pt (Y, y)) \times Y
\\
	&\simeq 
	(Z : \UU_\pt) \times (\mu : (z : |Z|) \to X \simeq_\pt (|Z|, z))
\\
	&\simeq 
				(Z : \UU_\pt) \times 
				(\mu : (z : |Z|) \to X \simeq_\pt (|Z|, z)) \times
				(p : X \simeq_\pt Z) \times 
				(\mu\, \pt_Z = p)\\
&\simeq (\mu : (x : |X|) \to X \simeq_\pt (|X|, x)) \times (\mu\, \pt_X = \id_X).
\end{align*}
In the first line, we simply unfold the definition of $TX$, and in the second
line we do some simple rearrangement, dropping the redundant assumption $\trunc
Y$. In the third line, we use contractibility of singletons to add two
redundant fields $p : X \simeq_\pt Z$ and $\mu\, \pt_Z = p$. And in the final
line, we use univalence and contractibility of singletons to remove two
redundant fields $Z$ and $p$.
\end{proof}

The following lemma will be our main tool to determine when types are
contractible. It is a special case of Lemma 8.6.1 from \cite{hottbook}, and
has a direct proof by induction.

\begin{lemma}\label{pointedsection}
If $A : \UU_\pt$ is an $n$-connected%
\footnote{While there are several equivalent definitions of connectedness, this
note is most easily understood with a \emph{recursive} definition: every type
is $(-2)$-connected, and a type is $(n+1)$-connected if it is merely inhabited and
its identity types are $n$-connected.}
pointed type, $B : |A| \to \UU$ is a family
of $(n+m+1)$-truncated types, and $\pt_B : B\, \pt_A$, then the type of `pointed sections of $B$',
\[
	(f : (a : |A|) \to B\, a) \times (f\, \pt_A = \pt_B),
\]
is $m$-truncated.
\end{lemma}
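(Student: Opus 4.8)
The plan is to induct on the truncation index $m$, keeping $n$, the type $A$, and the family $B$ variable throughout. Recall that a type is $(m+1)$-truncated exactly when all of its identity types are $m$-truncated, and that the base case $m = -2$ asks for contractibility. Throughout I write $P(A, B, \pt_B)$ for the type $(f : (a : |A|) \to B\, a) \times (f\, \pt_A = \pt_B)$ of pointed sections whose $m$-truncatedness we are after.

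For the inductive step, suppose the statement holds at level $m$ (for every $n$, every $n$-connected pointed $A$, and every family of $(n+m+1)$-truncated types), and let $B$ be a family of $(n+m+2)$-truncated types over an $n$-connected $A$. To see that $P(A, B, \pt_B)$ is $(m+1)$-truncated I would take two pointed sections $(f, p)$ and $(g, q)$ and analyse the identity type $(f,p) = (g,q)$. By function extensionality and the characterisation of paths in $\Sigma$-types, this identity type is equivalent to
\[
(\alpha : (a : |A|) \to (f\, a = g\, a)) \times (\alpha\, \pt_A = p \sq q^{-1}).
\]
Writing $B'\, a \coloneqq (f\, a = g\, a)$ and $\pt_{B'} \coloneqq p \sq q^{-1} : B'\, \pt_A$, the right-hand side is exactly $P(A, B', \pt_{B'})$. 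Since each $B\, a$ is $(n+m+2)$-truncated, each $B'\, a$ is $(n+m+1)$-truncated, so the inductive hypothesis applies with the same $n$ and the same $n$-connected $A$ and shows $P(A, B', \pt_{B'})$ is $m$-truncated. As the two sections were arbitrary, $P(A, B, \pt_B)$ is $(m+1)$-truncated.

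For the base case $m = -2$, the family $B$ consists of $(n-1)$-truncated types and the claim is that $P(A, B, \pt_B)$ is contractible. Here I would feed in the connectivity of $A$ directly: since $A$ is $n$-connected, each identity type $\pt_A = a$ is $(n-1)$-connected, so the basepoint inclusion $\mathrm{pt} : \mathbf{1} \to |A|$ is an $(n-1)$-connected map. The standard connectedness principle then makes restriction along $\mathrm{pt}$ an equivalence $((a : |A|) \to B\, a) \xrightarrow{\sim} B\, \pt_A$ on sections of the $(n-1)$-truncated family $B$; the fibre of this equivalence over $\pt_B$ is precisely $P(A, B, \pt_B)$, and is therefore contractible. (Alternatively, this base case can itself be proved by induction on $n$, with the trivial case $n = -1$ where $B$ is a family of contractible types.)

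The routine part is the identity-type computation in the inductive step, but one must be careful to track the transport and funext coherences in order to read off the correct basepoint $p \sq q^{-1}$ for the path family $B'$. The genuine content — and the main obstacle — is the base case, which is exactly where the $n$-connectivity of $A$ is consumed; everything above it is formal truncation-level bookkeeping driven by the reduction to $B'$.
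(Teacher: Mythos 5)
Your proof is correct and is essentially the argument the paper has in mind: the paper gives no details beyond citing Lemma 8.6.1 of \cite{hottbook} and remarking that the statement ``has a direct proof by induction'', and your induction on $m$ --- identity types of pointed sections are pointed sections of the path family $B'\,a \coloneqq (f\,a = g\,a)$ with basepoint $p \sq q^{-1}$, with the base case $m = -2$ discharged via the elimination principle for the $(n-1)$-connected map $\pt : \mathbf{1} \to |A|$ --- is precisely such a direct induction. Your parenthetical observation that the base case can itself be proved by induction on $n$ lines up with the paper's footnote insisting on the \emph{recursive} definition of connectedness, which is exactly how a fully self-contained version of the argument would consume the connectivity hypothesis rather than importing Lemma 7.5.7 as a black box.
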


\begin{cor}\label{nondep}
If $A : \UU_\pt$ is $n$-connected and $B : \UU_\pt$ is $(n+m+1)$-truncated, then
$A \to_\pt B$ is $m$-truncated. If $A$ and $B$ are \emph{both} $n$-connected and
$(n+m+1)$-truncated, then $A \simeq_\pt B$ is also $m$-truncated.
\end{cor}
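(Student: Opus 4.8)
The plan is to derive both claims from Lemma~\ref{pointedsection} by recognising the relevant mapping types as types of pointed sections. For the first claim, I would observe that $A \to_\pt B$ is by definition $(f : |A| \to |B|) \times (f\,\pt_A = \pt_B)$, which is precisely the type of pointed sections of the \emph{constant} family $B' \coloneqq \lambda(a : |A|).\,|B|$ over $A$, with chosen point $\pt_B : B'\,\pt_A$. Since $B$ is $(n+m+1)$-truncated, every $B'\,a$ is $(n+m+1)$-truncated, so Lemma~\ref{pointedsection} applies verbatim and yields that $A \to_\pt B$ is $m$-truncated. This step is immediate.

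For the second claim, I would first express $A \simeq_\pt B$ as a subtype of $A \to_\pt B$: a pointed equivalence is the same data as a pointed map $g : A \to_\pt B$ together with a proof that the underlying map $|g|$ is an equivalence, so that $A \simeq_\pt B \simeq (g : A \to_\pt B) \times \mathrm{isEquiv}\,|g|$. By the first part, $A \to_\pt B$ is $m$-truncated, using that $A$ is $n$-connected and $B$ is $(n+m+1)$-truncated. Since $\mathrm{isEquiv}\,|g|$ is always a proposition, hence $m$-truncated whenever $m \ge -1$, and since a $\Sigma$-type whose base and fibres are all $m$-truncated is again $m$-truncated, I would conclude that $A \simeq_\pt B$ is $m$-truncated for every $m \ge -1$.

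The one genuinely delicate point, and the step I expect to be the main obstacle, is the boundary case $m = -2$, where ``proposition'' no longer implies ``$m$-truncated''. Here I would invoke the symmetric hypothesis: when $m = -2$ the type $B$ is simultaneously $n$-connected and $(n-1)$-truncated, hence $n$-connected and $n$-truncated, hence contractible, and likewise for $A$. Every map between contractible types is an equivalence, so the fibre $\mathrm{isEquiv}\,|g|$ over the unique element of the contractible type $A \to_\pt B$ is itself contractible; thus $A \simeq_\pt B$ is contractible, i.e.\ $(-2)$-truncated. This is exactly the point at which connectivity \emph{and} truncation assumptions are needed on both $A$ and $B$, rather than on one side alone.
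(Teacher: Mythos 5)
Your proof of the first claim is exactly the paper's: read $A \to_\pt B$ as the type of pointed sections of the constant family at $|B|$ and apply Lemma~\ref{pointedsection}. For the second claim you take a genuinely different, and also correct, route. The paper replaces $A \simeq_\pt B$ by the equivalent type of \emph{biinvertible} pointed maps, $(f : A \to_\pt B) \times (g\, h : B \to_\pt A) \times (f \circ g = \id_B) \times (h \circ f = \id_A)$; the symmetric hypotheses make both $A \to_\pt B$ and $B \to_\pt A$ $m$-truncated by the first claim, identity types in $m$-truncated types are again $m$-truncated, and closure of $m$-truncated types under $\Sigma$ finishes the argument uniformly for all $m \ge -2$, with no case split. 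You instead write $A \simeq_\pt B \simeq (g : A \to_\pt B) \times \mathrm{isEquiv}\,|g|$ with propositional fibres, which settles all $m \ge -1$, and you correctly identify that this breaks precisely at $m = -2$, where ``proposition'' no longer implies ``$m$-truncated''; your patch there is sound, since for $m = -2$ the symmetric hypotheses make $A$ and $B$ each $n$-connected and $(n-1)$-truncated, hence $n$-truncated, hence contractible, so the base $A \to_\pt B$ and every fibre $\mathrm{isEquiv}\,|g|$ are contractible. Each approach buys something: yours isolates a sharper statement that the paper's uniform argument conceals, namely that for $m \ge -1$ the second claim needs only $A$ $n$-connected and $B$ $(n+m+1)$-truncated, with the symmetric hypotheses genuinely used only at the boundary $m = -2$; the paper's biinvertibility trick buys a one-line, case-free proof covering all $m$ at once, at the mild cost of invoking the standard equivalence between pointed equivalences and biinvertible pointed maps rather than the perhaps more familiar $\mathrm{isEquiv}$ presentation.
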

\begin{proof}
The first claim is a direct consequence of Lemma \ref{pointedsection}. For the
second, we have an equivalence between $A \simeq_\pt B$ and the type 
$(f : A \to_\pt B) \times (g\, h : B \to_\pt A) \times (f \circ g = \id_B)
\times (h \circ f = \id_A)$ of biinvertible pointed maps. This is 
$m$-truncated since $m$-truncated types are closed under $\Sigma$ and identity types.
\end{proof}

\begin{cor}\label{tors}
If $X$ is $n$-connected and $(2n+m+2)$-truncated, then the type of pointed
$X$-torsors is $m$-truncated. 
\end{cor}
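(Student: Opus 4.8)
The plan is to transport the problem across the equivalence of Lemma~\ref{mainequiv} and then apply the two preceding results with the right bookkeeping on truncation levels. By Lemma~\ref{mainequiv}, the type of pointed $X$-torsors is equivalent to
\[
	(\mu : (x : |X|) \to X \simeq_\pt (|X|, x)) \times (\mu\, \pt_X = \id_X),
\]
which is exactly the type of pointed sections of the family $B : |X| \to \UU$ given by $B\, x \coloneqq (X \simeq_\pt (|X|, x))$, pointed at $\id_X : B\, \pt_X$ (recall $X = (|X|, \pt_X)$, so $\id_X$ is a valid point of $B\, \pt_X$). To conclude via Lemma~\ref{pointedsection} with $A \coloneqq X$, which is $n$-connected by hypothesis, and read off the conclusion ``$m$-truncated'', I need each $B\, x$ to be $(n+m+1)$-truncated.

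To bound $B\, x$, I would apply the second part of Corollary~\ref{nondep} to the pointed types $X$ and $(|X|, x)$. Both have underlying type $|X|$, and since connectivity and truncation level are properties of the underlying type (with the point $x$ supplying the inhabitation required for $n$-connectedness), both are $n$-connected and $(2n+m+2)$-truncated. Matching the truncation hypothesis of Corollary~\ref{nondep}, written as $n + k + 1 = 2n + m + 2$, forces $k = n + m + 1$, so the corollary yields that $X \simeq_\pt (|X|, x)$ is $(n+m+1)$-truncated, as required. Feeding this family into Lemma~\ref{pointedsection} then gives that the type of pointed sections, hence the type of pointed $X$-torsors, is $m$-truncated.

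The argument is essentially arithmetic, and I do not expect a genuine obstacle. The doubling $2n$ in the hypothesis is the one feature worth understanding: the connectivity of $X$ is spent twice, each time lowering the effective truncation level by $n+1$ — once in Corollary~\ref{nondep} to control the equivalence type $X \simeq_\pt (|X|, x)$, and once again in Lemma~\ref{pointedsection} to control the pointed sections. The step most prone to an off-by-one slip is verifying that $(|X|, x)$ inherits both the connectivity and the truncation level of $X$ \emph{uniformly in $x$}, so that Corollary~\ref{nondep} applies at every $x : |X|$ and not merely at the basepoint.
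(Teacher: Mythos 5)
Your proof is correct and is exactly the paper's argument: the paper's proof is the one-line ``Combining Lemma~\ref{mainequiv}, Lemma~\ref{pointedsection}, and Corollary~\ref{nondep}'', and you have spelled out that combination with the right truncation arithmetic ($2n+m+2 = n + (n+m+1) + 1$, spending the connectivity of $X$ once in Corollary~\ref{nondep} and once in Lemma~\ref{pointedsection}). The point you flag about $(|X|, x)$ inheriting connectivity and truncation uniformly in $x$ is handled correctly, since both are properties of the underlying type $|X|$ alone.
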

\begin{proof}
Combining Lemma \ref{mainequiv}, Lemma \ref{pointedsection}, and Corollary \ref{nondep}.
\end{proof}

\begin{cor}\label{deloop}
If $X$ is $n$-connected and $2n$-truncated, then $TX$ is the unique delooping of $X$.
\end{cor}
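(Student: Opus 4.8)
The plan is to produce a basepoint for $TX$ and then identify its loop space with $X$ via the fundamental theorem of identity types, drawing on the torsor description of Lemma \ref{mainequiv} only through its consequence Corollary \ref{tors}. Concretely, the statement before Lemma \ref{mainequiv} already isolates the one missing ingredient: contractibility of the type of pointed $X$-torsors. So the whole argument amounts to extracting that contractibility from the hypotheses and then running the fundamental theorem.

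First I would specialise Corollary \ref{tors} to the case $m = -2$. Since $2n + (-2) + 2 = 2n$, and since a type is $(-2)$-truncated precisely when it is contractible, the hypotheses that $X$ is $n$-connected and $2n$-truncated give exactly that the type $((Y, h, \mu) : TX) \times Y$ of pointed $X$-torsors is contractible. Write $c = ((Y_0, h_0, \mu_0), y_0)$ for its centre of contraction, and take $\pt_{TX} \coloneqq (Y_0, h_0, \mu_0)$ as the basepoint of $TX$. Next I would apply the fundamental theorem of identity types \cite{intro} to the family $B : TX \to \UU$ sending $(Y, h, \mu) \mapsto Y$, based at $\pt_{TX}$ with distinguished element $y_0 : Y_0$ (here $B\, \pt_{TX}$ is $Y_0$). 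Its total space is the type of pointed $X$-torsors, just shown to be contractible, so the canonical maps $(\pt_{TX} = t) \to B\, t$ are equivalences for all $t : TX$. Specialising to $t = \pt_{TX}$ gives an equivalence $\Omega TX \simeq Y_0$; since this canonical map sends $\refl$ to $y_0$ by construction, it is a pointed equivalence $\Omega TX \simeq_\pt (Y_0, y_0)$. Finally, the torsor structure of the centre supplies $\mu_0\, y_0 : X \simeq_\pt (Y_0, y_0)$, and composing its inverse with the previous equivalence yields $\Omega TX \simeq_\pt X$, exhibiting $TX$ as a delooping of $X$.

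Most of the genuine work has already been discharged in Corollary \ref{tors}, so I expect the only real subtlety to be basepoint bookkeeping: checking that the equivalence delivered by the fundamental theorem is pointed (immediate from the defining clause $\refl \mapsto y_0$ of the canonical map) and that $\mu_0\, y_0$ is composed in the correct direction. Everything else is a matter of assembling the cited results in the right order.
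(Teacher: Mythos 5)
Your proposal is correct and is essentially the paper's own proof: the paper likewise instantiates Corollary \ref{tors} at $m=-2$ to get contractibility of pointed $X$-torsors and then invokes the fundamental theorem of identity types. You have merely spelled out the basepoint bookkeeping (the family $(Y,h,\mu)\mapsto Y$, the pointed equivalence $\Omega\, TX \simeq_\pt (Y_0,y_0)$, and the final composition with $\mu_0\, y_0$) that the paper leaves implicit.
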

A different proof that such $X$ have unique deloopings is in~\cite[Theorem~6]{highergroups}.
\begin{proof}
In this case, the type of pointed $X$-torsors is $(-2)$-truncated, so
Theorem~\ref{thm} applies.
\end{proof}

\begin{cor}\label{deloopofh}
If $X$ is $n$-connected and $(2n+1)$-truncated and $TX$ is merely inhabited, then
$TX$ is the unique delooping of $X$.
\end{cor}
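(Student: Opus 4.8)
The plan is to proceed exactly as in the proof of Corollary \ref{deloop}, but to use the extra hypothesis that $TX$ is merely inhabited to compensate for the weaker truncation bound on $X$. First I would apply Corollary \ref{tors} with $m = -1$: since $X$ is $n$-connected and $(2n+1)$-truncated, i.e.\ $(2n + (-1) + 2)$-truncated, the type of pointed $X$-torsors $((Y,h,\mu) : TX) \times Y$ is $(-1)$-truncated, that is, a mere proposition. Recall that a mere proposition is contractible as soon as it is inhabited, so it suffices to exhibit a single pointed $X$-torsor, and since the type is a proposition it even suffices to do so merely.

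Next I would produce a pointed $X$-torsor merely, using the hypothesis $\trunc{TX}$ together with the mere-inhabitation field built into the definition of a torsor. Since the target $\trunc{((Y,h,\mu) : TX) \times Y}$ is a proposition, I may eliminate the hypothesis $\trunc{TX}$ to obtain an honest torsor $(Y, h, \mu)$; its component $h : \trunc Y$ then lets me eliminate once more to obtain an honest point $y : Y$, whence $(Y, h, \mu, y)$ is a pointed torsor witnessing $\trunc{((Y,h,\mu) : TX) \times Y}$. Because the type of pointed $X$-torsors is itself a proposition, this mere inhabitation upgrades to an actual element, so the type of pointed $X$-torsors is inhabited and therefore contractible.

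Finally I would conclude exactly as before: contractibility of the type of pointed $X$-torsors makes $TX$ pointed, and by the fundamental theorem of identity types its loop space is equivalent to $X$, so $TX$ is a delooping of $X$.

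The only real subtlety, and the step I would watch most carefully, is the interplay between the two uses of propositional truncation in the middle paragraph. The argument works precisely because the bound from Corollary \ref{tors} lands at $(-1)$ rather than $(-2)$: this is exactly the truncation level at which `merely inhabited' and `inhabited' coincide, so the hypothesis $\trunc{TX}$ is strong enough to recover contractibility. Nothing here requires an actual chosen point of $TX$, which is what makes the weaker assumption sufficient.
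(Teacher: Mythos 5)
Your proposal is correct and follows the paper's own argument exactly: apply Corollary \ref{tors} with $m = -1$ to see that the type of pointed $X$-torsors is a proposition, use $\trunc{TX}$ together with the $\trunc{Y}$ field of a torsor to show this proposition is merely inhabited (hence contractible), and conclude as in Corollary \ref{deloop}. Your middle paragraph simply spells out the truncation eliminations that the paper's phrase ``there also merely exists a pointed $X$-torsor'' leaves implicit.
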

\begin{proof}
In this case the type of pointed $X$-torsors is $(-1)$-truncated, i.e. a
proposition. Since we assume $TX$ is merely inhabited, there also merely exists
a pointed $X$-torsor. A merely inhabited proposition is contractible, so we
can again apply Theorem~\ref{thm}.
\end{proof}

\section{Delooping maps}

Suppose $A, B : \UU_\pt$ are pointed types, and $f : \Omega A \to_\pt \Omega B$
is a pointed map on loop spaces. When can we find $F : A \to_\pt B$ such that
$f = \Omega F$? More precisely, we want a useful description of the type
$\Omega^{-1}f \coloneqq (F : A \to_\pt B) \times (f = \Omega F)$. For example,
it is necessary that we have $f(p \sq q) = f(p) \sq f(q)$.

\begin{lemma}\label{cab}
We have an equivalence of types
\[\Omega^{-1} f \simeq (a : |A|) \to (b : |B|) \times C\, a\, b\]
where $C : |A| \to |B| \to \UU$ is given by
\[C\, a\, b \coloneqq (h : (a = \pt_A) \to (b = \pt_B)) \times
	\+((p : a = \pt_A) \to f = D(h,p)\+)\]
and we define $D(h, p) : \Omega A \to_\pt \Omega B$ by
\[ |D(h, p)|(q) = (h\, p)^{-1} \sq\, h(p \sq q),\]
pointed in the obvious way.
\end{lemma}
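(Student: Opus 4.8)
The plan is to massage both sides into a common shape by repeatedly distributing $\Pi$ over $\Sigma$ and applying singleton contractibility (based path induction), after which the two sides match fibrewise over the underlying function. First I would unfold both types as $\Sigma$-types over the underlying map $g \coloneqq |F| : |A| \to |B|$. On the left, $\Omega^{-1}f = (F : A \to_\pt B) \times (f = \Omega F)$ becomes $(g : |A| \to |B|) \times (e : g\,\pt_A = \pt_B) \times (f = \Omega(g, e))$, where $\Omega(g,e)$ is the looping of the pointed map $(g,e)$. On the right, the distributivity equivalence $(a : |A|) \to (b : |B|) \times C\,a\,b \simeq (g : |A| \to |B|) \times ((a : |A|) \to C\,a\,(g\,a))$ exhibits the target as a $\Sigma$-type over the same $g$. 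It therefore suffices to produce, for each fixed $g$, an equivalence
\[ (a : |A|) \to C\,a\,(g\,a) \;\simeq\; (e : g\,\pt_A = \pt_B) \times (f = \Omega(g, e)). \]

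Fixing $g$, I would distribute the $\Pi$ over $a$ past the $\Sigma$ in the definition of $C$, rewriting the left-hand side as
\[ (h : (a : |A|) \to (a = \pt_A) \to (g\,a = \pt_B)) \times ((a : |A|)(p : a = \pt_A) \to f = F(h_a, p)), \]
where $h_a$ abbreviates $h\,a$. The first factor is a dependent function taking a path $a = \pt_A$ as argument, so by singleton contractibility the evaluation $h \mapsto h_{\pt_A}(\refl)$ is an equivalence onto $(e : g\,\pt_A = \pt_B)$, with inverse sending $e$ to the $h$ determined by path induction as $h_a(q) = \ap_g(q) \sq e$. Reindexing the whole $\Sigma$-type along this equivalence replaces the first factor by $(e : g\,\pt_A = \pt_B)$ and substitutes this particular $h$ into the second factor.

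It remains to simplify the second factor for this $h$. Since $(a : |A|) \times (a = \pt_A)$ is contractible, a second application of singleton contractibility collapses the product $(a : |A|)(p : a = \pt_A) \to f = F(h_a, p)$ to its value at the centre $(\pt_A, \refl)$, namely $f = F(h_{\pt_A}, \refl)$. The crux is then the computation that $F(h_{\pt_A}, \refl) = \Omega(g, e)$: unfolding the definitions and using $h_{\pt_A}(\refl) = e$ and $h_{\pt_A}(q) = \ap_g(q) \sq e$,
\[ |F(h_{\pt_A}, \refl)|(q) = (h_{\pt_A}(\refl))^{-1} \sq h_{\pt_A}(q) = e^{-1} \sq \ap_g(q) \sq e = |\Omega(g, e)|(q). \]
Combining the displayed equivalences yields the fibrewise equivalence above, and hence the lemma.

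I expect the main obstacle to be bookkeeping rather than mathematics: the second factor depends on $h$ through the entire function $h_a$ (not a single value), so the two uses of singleton contractibility must be carefully ordered — first fixing $h$ via reindexing on the $h$-variable, and only then contracting the $(a,p)$-product in the substituted predicate. A secondary point of care is the pointing data: one must check that the chosen pointing of $F(h_{\pt_A}, \refl)$ agrees with the canonical pointing of $\Omega(g,e)$, so that $F(h_{\pt_A},\refl) = \Omega(g,e)$ holds as an identity of \emph{pointed} maps and not merely of underlying functions. This verification, together with the unit-law and functoriality manipulations hidden in the displayed computation, is routine.
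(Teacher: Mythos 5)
Your proposal is correct and takes essentially the same route as the paper's proof: both fibre the equivalence over the underlying function via type-theoretic choice, reduce to a fibrewise statement, apply two singleton contractions (one on the $h$-variable, one on the pair $(a, p)$), and finish by checking $F(h, \refl_{\pt_A}) = \Omega(g, e)$ as pointed maps using unit laws. The only, immaterial, difference is the order of the two contractions: you fix $h$ by reindexing first and then contract $(a,p)$, whereas the paper contracts $(a,p)$ first and then identifies $h$ with $\pt_F$.
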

We can think of $C$ as a proof-relevant relation approximating a function $F : |A| \to |B|$;
it would be a function if only $(b : |B|) \times C\, a\, b$ were contractible for all $a : |A|$.
\begin{proof}
We have
\begin{align*}
	\Omega^{-1} f &\simeq 
		(F : |A| \to |B|) \times (\pt_F : F\, \pt_A = \pt_B)) \times (f = \Omega(F, \pt_F))
	\\
	(a : |A|) \to (b : |B|) \times C\, a\, b
	&\simeq (F : |A| \to |B|) \times (a : |A|) \to C\, a\, (F\, a).
\end{align*}
So it suffices to show that for $F : |A| \to |B|$, we have
\[(\pt_F : F\, \pt_A = \pt_B) \times (f = \Omega (F, \pt_F)) 
	\simeq (a : |A|) \to C\, a\, (F\, a).\] Now by path induction and type-theoretic choice, we have
\begin{align*}(a : |A|) \to C\, a\, (F\, a) \simeq
(h : (a : |A|) \to (a = \pt_A) \to (F\, a = \pt_B))
 \times (f = D(h, \refl_{\pt_A})).
\end{align*}
Again by path induction, we have 
	$((a : |A|) \to (a = \pt_A) \to (F\, a = \pt_B)) \simeq (F\, \pt_A = \pt_B)$.
It suffices to show that if $h$ corresponds to $\pt_F$ under this equivalence, then
$D(h,\refl_{\pt_A}) = \Omega(F, \pt_F)$. This holds essentially by definition.
\end{proof}

\begin{cor}\label{maptrunc}
Suppose $|A|$ is $n$-connected and $|B|$ is $(2n+m+2)$-truncated, where $n \ge 0$ and $m \ge -2$.
Then $\Omega^{-1}f$ is $m$-truncated.
\end{cor}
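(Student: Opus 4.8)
The plan is to rewrite $\Omega^{-1}f$ using Lemma \ref{cab} and bound the resulting section type. Write $P\,a \coloneqq (b : |B|) \times C\,a\,b$, so that Lemma \ref{cab} gives $\Omega^{-1}f \simeq (a : |A|) \to P\,a$. The naive fibrewise bounds are too weak: for fixed $a$ the summand $b : |B|$ ranges over a $(2n+m+2)$-truncated type and $h : (a = \pt_A) \to (b = \pt_B)$ over a $(2n+m+1)$-truncated one, both above the target level, so the coherence $(p : a = \pt_A) \to f = F(h,p)$ must be used to rigidify the pair $(b,h)$. My strategy is: (i) bound the fibre $P\,\pt_A$ of the evaluation map directly; (ii) deduce from (i) that every $P\,a$ is $(n+m+1)$-truncated; and (iii) assemble these through the fibre sequence of $\mathrm{ev}_{\pt_A} : ((a : |A|) \to P\,a) \to P\,\pt_A$.

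For (i) I claim $P\,\pt_A$ is $m$-truncated. Writing $\Omega A$ for $\pt_A = \pt_A$, we have $P\,\pt_A = (b : |B|) \times (h : \Omega A \to (b = \pt_B)) \times ((p : \Omega A) \to f = F(h,p))$. Evaluating the coherence at $p = \refl_{\pt_A}$ forces the cocycle law $h\,q = h\,\refl_{\pt_A} \sq |f|\,q$, so $h$ is determined by the single value $h\,\refl_{\pt_A} : b = \pt_B$; contracting the singleton $(b, h\,\refl_{\pt_A})$ leaves $b = \pt_B$ and $h = |f|$, and what remains of the coherence is the family $p \mapsto (f = F(|f|, p))$ over $\Omega A$, pointed at $p = \refl_{\pt_A}$, where it holds tautologically. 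Thus $P\,\pt_A$ is a type of pointed sections over $\Omega A$. Since $\Omega A$ is $(n-1)$-connected and $\Omega B$ is $(2n+m+1)$-truncated (as $|B|$ is $(2n+m+2)$-truncated), Corollary \ref{nondep} shows $\Omega A \to_\pt \Omega B$ is $(n+m+1)$-truncated, so each fibre $f = F(|f|,p)$, being one of its identity types, is $(n+m)$-truncated. As $(n+m) = (n-1) + m + 1$, Lemma \ref{pointedsection} over the $(n-1)$-connected base $\Omega A$ yields that $P\,\pt_A$ is $m$-truncated.

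For (ii), fix $a : |A|$; being $(n+m+1)$-truncated is a proposition, and since $|A|$ is $n$-connected with $n \ge 0$ the type $a = \pt_A$ is merely inhabited, so I may assume a point $p_0 : a = \pt_A$ and reduce by path induction to the case $a = \pt_A$, where $P\,\pt_A$ is $m$-truncated by (i) and hence $(n+m+1)$-truncated. Finally, for (iii), the family $P$ is $(n+m+1)$-truncated by (ii), so Lemma \ref{pointedsection} over the $n$-connected base $A$ shows each fibre of $\mathrm{ev}_{\pt_A}$, being a type of pointed sections of $P$, is $m$-truncated; combined with (i), the total space $(a : |A|) \to P\,a \simeq \Sigma_{c : P\,\pt_A}\,\mathrm{fib}_{\mathrm{ev}_{\pt_A}}(c)$ is $m$-truncated, and therefore so is $\Omega^{-1}f$. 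The main obstacle is step (i): turning the coherence into an honest pointed-section statement over $\Omega A$ via the cocycle law and the singleton contraction, so that the numerics of Lemma \ref{pointedsection} and Corollary \ref{nondep} line up.
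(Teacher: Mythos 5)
Your proposal is correct and is essentially the paper's proof: your step (i) is exactly the paper's chain of singleton manipulations identifying $(b : |B|) \times C\,\pt_A\,b$ with a type of pointed sections over $\pt_A = \pt_A$ whose fibres are identity types in the $(n+m+1)$-truncated type $\Omega A \to_\pt \Omega B$, and your step (ii) is the paper's reduction to the case $a = \pt_A$ via $0$-connectedness of $|A|$. The only divergence is cosmetic: since (i) and (ii) already show that \emph{every} fibre $(b : |B|) \times C\,a\,b$ is $m$-truncated, the paper concludes directly because $\Pi$-types preserve truncation levels, so your step (iii) --- the evaluation-map fibre sequence with the deliberately weakened $(n+m+1)$ bound --- is a correct but unnecessary detour.
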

\begin{proof}
It suffices to show that, for any $a : |A|$, the type $(b : |B|) \times C\, a\, b$ is $m$-truncated.
Since to be truncated is a proposition and $|A|$ is at least $0$-connected, it suffices
to consider the case where $a$ is $\pt_A$. In this case we have
\begin{align*}
(b : |B|) \times C\, \pt_A\, b &\simeq (b : |B|) \times ((h, t)
		: C\, \pt_A\, b) \times (q : b = \pt_B) \times (h\, \refl_{\pt_A} = q)\\
	&\simeq (h : \Omega A \to_\pt \Omega B) \times (p : \pt_A = \pt_A) \to (f = D(|h|, p)),
\end{align*}
by first adding two redundant singleton fields, and then removing another pair of singleton fields.
One can prove $E(h) : h = D(|h|, \refl_{\pt_A})$ using unit laws,
so we further have
\[ (b : |B|) \times C\, \pt_A\, b \simeq (t : (p : \pt_A = \pt_A) \to f = D(|f|,p))
	\times (t\, \refl_{\pt_A} = E(f)). \]
This is the type of pointed sections of a pointed type family over $\pt_A = \pt_A$.
The fibres are identity types in $\Omega A \to_\pt \Omega B$, which is $(n+m+1)$-truncated.
Since the fibres are $(n+m)$-truncated and the base $\pt_A = \pt_A$ is $(n-1)$-connected,
the type of pointed sections is $m$-truncated as claimed.
\end{proof}

\begin{cor}\label{elim}
If $|A|$ is $n$-connected and $|B|$ is $2n$-truncated, then $\Omega$ is an equivalence
\[ (A \to_\pt B) \simeq (\Omega A \to_\pt \Omega B). \]
\end{cor}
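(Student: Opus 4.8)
The plan is to prove that $\Omega$ is an equivalence by the standard route of showing that all of its fibres are contractible. Fix a pointed map $f : \Omega A \to_\pt \Omega B$. The fibre of $\Omega$ over $f$ is by definition the type $(F : A \to_\pt B) \times (\Omega F = f)$, and reversing the identification gives an equivalence with $\Omega^{-1} f = (F : A \to_\pt B) \times (f = \Omega F)$. So the whole statement reduces to showing that $\Omega^{-1} f$ is contractible for every $f$.

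This is now an immediate application of Corollary \ref{maptrunc}. That corollary says that whenever $|A|$ is $n$-connected and $|B|$ is $(2n+m+2)$-truncated, with $n \ge 0$ and $m \ge -2$, the type $\Omega^{-1} f$ is $m$-truncated. I would simply instantiate it at $m = -2$: the hypothesis then demands that $|B|$ be $(2n - 2 + 2) = 2n$-truncated, which is exactly what we assume here. Thus $\Omega^{-1} f$ is $(-2)$-truncated, i.e.\ contractible, for every $f$, and a map with contractible fibres is an equivalence.

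The real mathematical content has already been spent in Corollaries \ref{cab} and \ref{maptrunc}, so there is essentially no obstacle left at this stage. The only points requiring a little care are purely bookkeeping: matching the direction of the path in the fibre of $\Omega$ against the convention in $\Omega^{-1} f$ (resolved by path reversal), and checking that the degenerate value $m = -2$ is admissible in Corollary \ref{maptrunc}, which it is. I would also remark that the statement tacitly assumes $n \ge 0$, as inherited from Corollary \ref{maptrunc}; the boundary case $n = -1$ holds trivially, since then $2n = -2$ forces $|B|$ to be contractible and both $A \to_\pt B$ and $\Omega A \to_\pt \Omega B$ are contractible as well.
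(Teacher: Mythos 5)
Your proof is correct and is exactly the argument the paper intends: Corollary \ref{elim} is stated without proof precisely because it is the $m = -2$ instance of Corollary \ref{maptrunc}, so that each fibre $\Omega^{-1} f$ is contractible and $\Omega$ is an equivalence. Your bookkeeping remarks (path reversal to match the fibre of $\Omega$ with $\Omega^{-1} f$, the tacit $n \ge 0$, and the trivial boundary case) are accurate and add nothing objectionable.
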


\begin{cor}\label{gpelim}
If $|A|$ is $n$-connected and $|B|$ is $(2n+1)$-truncated, then $\Omega$ identifies
$A \to_\pt B$ with the subtype of $\Omega A \to_\pt \Omega B$ consisting of
$f : \Omega A \to_\pt \Omega B$ such that $(b : |B|) \times C\, \pt_A\, b$,
which is logically equivalent to $C\, \pt_A\, \pt_B$,
and hence to $(p\, q : \pt_A = \pt_A) \to f(p \sq q) = f(p) \sq f(q)$.
\end{cor}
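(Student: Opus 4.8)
The plan is to specialise Corollary \ref{maptrunc} to $m = -1$ and then unwind the resulting propositional fibre into a concrete condition on $f$. First, Corollary \ref{maptrunc} tells us that $\Omega^{-1}f$ is $(-1)$-truncated, i.e. a proposition, for every $f : \Omega A \to_\pt \Omega B$. Hence the fibres of the looping map $\Omega : (A \to_\pt B) \to (\Omega A \to_\pt \Omega B)$ are propositions, so $\Omega$ is an embedding and identifies $A \to_\pt B$ with the subtype of those $f$ for which $\Omega^{-1}f$ is inhabited. To turn this into a pointwise condition, I would invoke Lemma \ref{cab} to rewrite $\Omega^{-1}f \simeq (a : |A|) \to (b : |B|) \times C\,a\,b$, noting that the proof of Corollary \ref{maptrunc} already shows each fibre $(b : |B|) \times C\,a\,b$ to be a proposition. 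Since $|A|$ is $n$-connected with $n \ge 0$, a proposition is in particular $(n-1)$-truncated, so the elimination principle for connected types makes evaluation at $\pt_A$ an equivalence $((a : |A|) \to (b : |B|) \times C\,a\,b) \simeq (b : |B|) \times C\,\pt_A\,b$. This exhibits the subtype as consisting of those $f$ such that $(b : |B|) \times C\,\pt_A\,b$, which is a proposition as required.

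It then remains to establish the two logical equivalences. For $(b : |B|) \times C\,\pt_A\,b \leftrightarrow C\,\pt_A\,\pt_B$, the backward map simply includes $C\,\pt_A\,\pt_B$ at $b \coloneqq \pt_B$, while the forward map sends $(b, h, t)$ to its transport along the path $h\,\refl_{\pt_A} : b = \pt_B$, using that $C$ is a type family in its second argument.

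For $C\,\pt_A\,\pt_B \leftrightarrow ((p\,q : \pt_A = \pt_A) \to f(p \sq q) = f(p) \sq f(q))$, I would unfold $C\,\pt_A\,\pt_B$ as a function $h : (\pt_A = \pt_A) \to (\pt_B = \pt_B)$ together with a witness that $f = F(h, p)$ for all $p$. Specialising to $p = \refl_{\pt_A}$ and using $|F(h,\refl_{\pt_A})|(q) = (h\,\refl_{\pt_A})^{-1} \sq h(q)$ identifies $h(q)$ with $h\,\refl_{\pt_A} \sq f(q)$; substituting this back into $|F(h,p)|(q) = (h\,p)^{-1} \sq h(p \sq q)$ and cancelling $(h\,\refl_{\pt_A})^{-1} \sq h\,\refl_{\pt_A}$ reduces the coherence $f = F(h,p)$ to exactly $f(p \sq q) = f(p) \sq f(q)$. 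Conversely, given the homomorphism law one takes $h \coloneqq |f|$, for which the same computation yields $F(|f|, p) = f$.

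The main obstacle is this last logical equivalence: carrying out the cancellation of unit and inverse laws cleanly, and checking that the pointedness components of the equality $f = F(h,p)$ of \emph{pointed} maps are treated correctly — this is where the unit-law coherence $G$ from the proof of Corollary \ref{maptrunc} enters. Everything else is formal. It is worth stressing that, because the subtype is already cut out by the proposition $(b : |B|) \times C\,\pt_A\,b$, only logical equivalences — rather than type equivalences — are needed for the two displayed conditions, even though the homomorphism condition itself fails to be a proposition once $n \ge 1$.
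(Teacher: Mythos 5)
Your proposal is correct and follows exactly the route the paper intends: Corollary \ref{gpelim} is stated without explicit proof, being the $m = -1$ instance of Corollary \ref{maptrunc} (so $\Omega$ has propositional fibres, hence is an embedding) combined with Lemma \ref{cab}, connectedness elimination to reduce the fibrewise condition to $a = \pt_A$, and unit-law computations unwinding $C\, \pt_A\, \pt_B$ into the homomorphism condition. Your observations that only logical equivalences are needed because the subtype is cut out by the proposition $(b : |B|) \times C\, \pt_A\, b$, and that the pointedness coherence in the backward direction is handled by the witness $G$ from the proof of Corollary \ref{maptrunc}, match the paper's level of rigor precisely.
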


\section{Applications}

In homotopy type theory, we define the ordinary cohomology group $H^n(X; G)$ of
a type $X$ with coefficients in a an abelian group $G$ as the set-truncation 
$\trunc {X \to \K(G, n)}_0$, where $\K(G, n)$ is an Eilenberg--MacLane space.
The algebraic structure of these cohomology groups comes from various operations
at the level of Eilenberg--MacLane spaces, which we now discuss.

\subsection{$\boldsymbol{\K(G,n)}$}
Let $G$ be a group, so that in particular $G$ is a $0$-truncated type.
One can define a $0$-connected pointed type $\K(G, 1) : \UU_\pt$ with
$\Omega \K(G, 1) \simeq_\grp G$ as a type of torsors, similar to our $TX$ \cite{symmetry}.
Note that $\K(G, 1)$ is necessarily 1-truncated.
By Corollary \ref{gpelim}, we have that if $B$ is $1$-truncated, then
$(\K(G, 1) \to_\pt B) \simeq (G \to_\grp \Omega B)$; we think of this as an elimination
principle for $\K(G, 1)$. From this elimination principle, it follows that if
$X : \UU_\pt$ is another $0$-connected, $1$-truncated pointed type, then 
$(\K(G, 1) \simeq_\pt X) \simeq (G \simeq_\grp \Omega X)$.
					
When can we find $\K(G, 2) : \UU_\pt$
with $\Omega \K(G, 2) \simeq_\pt \K(G, 1)$? By Corollary \ref{deloopofh}, it suffices to
have 
	\[(\mu : (x : |\K(G, 1)|) \to \K(G, 1) \simeq_\pt (|\K(G, 1)|, x)) \times (\mu\, \pt = \id),\]
or equivalently 
	\[(\mu : (x : |\K(G, 1)|) \to G \cong_\grp (x = x)) \times (\mu\, \pt = \id).\]
Given a dependent elimination principle for $\K(G, 1)$, we could analyse this
type of pointed sections directly. Alternatively, we can think of pointed sections as 
pointed maps into a $\Sigma$-type with extra structure, and apply our non-dependent
elimination principle. The loop space of the $\Sigma$-type
($x : |\K(G, 1)|) \times G \cong_\grp (x = x)$ is the centre $\Zc(G)$ of $G$, and so we
are left to ask when the inclusion $\Zc(G) \to_\grp G$ has a section. This happens
precisely when $G$ is abelian. So $\K(G, 1)$ has a delooping if and only if $G$ is abelian, in
which case the delooping is unique. As soon as we have $\K(G, 2)$,
Corollary \ref{deloop} gives $\K(G, n) : \UU_\pt$ for every $n$
with $\Omega \K(G, n+1) \simeq_\pt \K(G, n)$. We also get an elimination
principle by repeated application of Corollary \ref{elim}: for any $n \ge 1$ and any 
$n$-truncated type $B$, we have $(\K(G, n) \to_\pt B) \simeq (G \to_\grp \Omega^n B)$.
One can check, combining the definition of $TX$ with the elimination principle for $\K(G, n)$,
that for $n \ge 0$ we have
\[
	\K(G, n+2) \simeq (Y : \UU) \times n\mathrm{-connected}(Y) \times (y : Y) \to 
		G \simeq_\grp \Omega^{n+1}(Y, y).
\]

\subsection{$\boldsymbol{\pi_n(S^n)}$}

While we have systematically avoided talking about higher inductive types, we
can still say something about them. Recall that the $n$-sphere $S^n : \UU_\pt$
is defined as a pointed type with $(S^n \to_\pt B) \simeq \Omega^n B$. If $B$
is $n$-truncated for $n \ge 1$, we have $\Omega^n B \simeq (\Z \to_\grp
\Omega^n B)$, since $\Z$ is the free group on one generator, which as
we've seen is equivalent to $\K(\Z, n) \to_\pt B$. By the Yoneda lemma, we get
that $\K(\Z, n)$ is the $n$-truncation of $S^n$. In particular, 
$\pi_n(S^n) \simeq_\grp \Omega^n(\trunc{S^n}_n) \simeq_\grp \Omega^n(\K(\Z,n)) \simeq_\grp \Z$,
and $\pi_k(S^n) = 0$ for $k < n$.

More generally, this argument shows that $A \simeq_\pt \trunc{\Sigma \Omega A}_{2n}$
when $A$ is $n$-connected and $2n$-truncated. Applying the same fact to the delooping $TA$
of $A$, we get that $TA \simeq_\pt \trunc{\Sigma A}_{2n+1}$. Taking loop spaces of both sides, we get $A \simeq_\pt \trunc{\Omega \Sigma A}_{2n}$, which is part of the
Freudenthal suspension theorem.

\subsection{The cup product}

We now give some sketches on how to define cohomology operations.
Given a bilinear map $L \to_\grp M \to_\grp N$, we define a cup-product
\[\smile\, : \K(L, n) \to_\pt \K(M, m) \to_\pt \K(N, n+m),\] 
similar to the definition in \cite{cohomology} and \cite[Definition~2.26]{hurewicz}. Note that we ask for the cup product
to respect pointing, corresponding to $0 \smile y = x \smile 0 = 0$; without this extra piece
of specification, the definition would not work.
Indeed, $\K(M, m) \to_\pt \K(N, n+m)$ is $n$-truncated,
so the elimination principle applies:%
\footnote{Formally this argument assumes $m, n \ge 1$, but it can be adapted to cover all
$m, n \ge 0$.}
\begin{align*}
\K(L,n) \to_\pt \K(M, m) \to_\pt \K(N, n+m)
		&\simeq L \to_\grp \Omega^n (\K(M, m) \to_\pt \K(N, n+m))\\
		&\simeq L \to_\grp \K(M, m) \to_\pt \Omega^n \K(N, m+n)\\
		&\simeq L \to_\grp \K(M, m) \to_\pt \K(N, m)\\
		&\simeq L \to_\grp M \to_\grp N.
\end{align*}
The forward maps in this composite are given explicitly by iterated looping,
so we arrive at a definition of the cup product as the unique bi-pointed map whose looping
gives back the bilinear map we started with. With this characterisation, we expect that 
algebraic properties of the cup product follow from analogous properties of looping. 
For example, one can prove that the following square
anti-commutes, and this corresponds to graded commutativity of the cup product.
\begin{figure}[!h]
\centering
\begin{tikzcd}
A \to_\pt B \to_\pt C 
	\arrow[r] \arrow[d] & 
\Omega A \to_\pt B \to_\pt \Omega C
\arrow[d]
\\
A \to_\pt \Omega B \to_\pt \Omega C 
	\arrow[r] &
\Omega A \to_\pt \Omega B \to_\pt \Omega^2 C
\end{tikzcd}
\end{figure}
\subsection{Steenrod squares}
Let us now use Corollary \ref{gpelim} to construct Steenrod squares as `stable cohomology operations'
$\Sq^i_n : \K(\Z/2, n) \to_\pt \K(\Z/2, n+i)$ with $\Sq^i_n$ corresponding to $\Omega \Sq^i_{n+1}$. 
We first define $\Sq^i_i$ as the
cup product square $x \mapsto x \smile x$. To deloop this to $\Sq^i_{i+1}$, 
we need to show \[(x + y) \smile (x+y) = x \smile x + y \smile y,\] which follows
from distributivity and graded commutativity
since we are working mod $2$. Given $\Sq^i_{i+1}$ we can define $\Sq^i_n$ for all
$n$ using Corollary \ref{elim}, by looping and delooping as appropriate.

\section{Concluding remarks}

Our Lemma \ref{cab} can be compared with the construction of functors out of a
Rezk completion in \cite[Theorem~9.9.4]{hottbook} and the construction of
maps $\K(G, 1) \to_\pt \K(H, 1)$ in \cite[Lemma~4.10.1]{symmetry}. Variants of
the relation $C\, a\, b$ are used in all cases. The idea can be understood as a
type-theoretic analogue of the arguments in \cite[Sections~5.2-5.3]{deligne}.

The arguments in this note are well-suited to formalisation. Indeed, many parts
have already been formalised twice: first by Louise
Leclerc \cite{louise}, and later by Axel Ljungström in order to
develop the theory of Steenrod squares.

In upcoming work, we take the ideas of this note much further to give an exact,
infinitary description of higher groups -- as well as higher
equivalence relations more generally -- and morphisms between them. In fact the description
of morphisms is in a precise sense obtained mechanically from the descriptions of
objects, explaining the similarity between the second and third sections of this note
(compare for example Corollaries \ref{tors}, \ref{deloop}, \ref{deloopofh} 
with Corollaries \ref{maptrunc}, \ref{elim}, \ref{gpelim}).

\printbibliography

\end{document}